\newtheorem{theorem}{Theorem}[section]
\newtheorem{lemma}[theorem]{Lemma}
\newtheorem{proposition}[theorem]{Proposition}
\newtheorem{definition}[theorem]{Definition}
\newcommand{\bo}{{\bf 0}}
\newcommand{\C}{\mathbb C}
\newcommand{\R}{\mathbb R}
\newcommand{\Z}{\mathbb Z}
\newcommand{\diam}{{\rm diam}}
\newcommand{\supp}{{\rm supp}}
\newcommand{\sphere}{{\bf S}}
\begin{document}

\title[Choking horns in Lipschitz Geometry of Complex Algebraic Varieties.]{Choking horns in Lipschitz Geometry of Complex Algebraic Varieties.}

\author[L. Birbrair]{Lev Birbrair}\thanks{$^a$Research supported under CNPq grant no 300575/2010-6}
\address{Departamento de Matem\'atica, Universidade Federal do Cear\'a
(UFC), Campus do Picici, Bloco 914, Cep. 60455-760. Fortaleza-Ce,
Brasil} \email{birb@ufc.br}

\author[A. Fernandes]{Alexandre Fernandes}\thanks{$^b$Research supported under CNPq grant no 302998/2011-0}
\address{Departamento de Matem\'atica, Universidade Federal do Cear\'a
(UFC), Campus do Picici, Bloco 914, Cep. 60455-760. Fortaleza-Ce,
Brasil} \email{alexandre.fernandes@ufc.br}

\author[V. Grandjean]{Vincent Grandjean}\thanks{$^c$Research supported by CNPq grant no 150555/2011-3}
\address{Departamento de Matem\'atica, Universidade Federal do Cear\'a
(UFC), Campus do Picici, Bloco 914, Cep. 60455-760. Fortaleza-Ce,
Brasil}
\email{vgrandje@fields.utoronto.ca}

\author[D. O'Shea]{Donal O'Shea}\thanks{The fourth author was supported by a faculty fellowship and income
from the Elizabeth T. Kennan chair, both from Mount Holyoke College, together with support from
the mathematics department at UFC in Fortaleza}

\address{President's Office, New College of Florida, 5800 Bayshore Road, Sarasota, FL 34243, USA}
\email{doshea@ncf.edu}

\thanks{The authors wish to thank Alberto Verjovsky and Anne Pichon for valuable conversations and comments.}

\keywords{Lipschitz Geometry, conic and non-conic singularities,}

\subjclass[2010]{14B05, 14J17, 51F99}

\begin{abstract}
We study the Lipschitz Geometry of Complex Algebraic Singularities. For this purpose we introduce the
notion of choking horns. A Choking horn is a family of cycles on the family of the sections of an algebraic
variety by very small spheres centered at a singular point, such that the cycles cannot be boundaries of nearby
chains. The presence of choking horns is an obstruction to metric conicalness as we can see with some classical
isolated hypersurfaces singularities which we prove are not metrically conic. We also show that there exist
infinitely countably many singular varieties, which are locally homeomorphic, but not locally bi-Lipschitz equivalent
with respect to the inner metric.
\end{abstract}

\maketitle

\section{Introduction}
Complex analytic subsets of $\C^n$ comes equipped with two metrics: \em the outer metric \em where the
distance between two points of the subset is their distance measured in $\C^n$ and the \em inner metric
\em where the distance between two points is the infimum of the length of rectifiable curves in the subset
connecting the two points.
It has been known for a long time that complex analytic curves equipped with the inner metric
are, as germs at their singular point, bi-Lipschitz homeomorphic to finitely many cones over a circle.
In other words the local inner geometry of a complex curve at a singular point is that of a metric cone.
This is the geometric analog for curves of the classical result that a complex analytic subset is
topologically conical; that is, in the neighborhood of a singular point, it is homeomorphic to the cone
over its link.
%last sentence above is new

%slight rewrite to following paragraph
In higher dimensions, however, it has become apparent that despite being topologically conical at singularities,
a complex analytic subset need not be metrically conic.  Indeed, a series of recent papers \cite{BF,BFN1,BFN2,
BFN3,BNP} have shown that the inner geometry of complex analytic subsets is both subtle and rich, and have begun
to work through the structure and details, mostly in the case of isolated singularities of complex surfaces,
of the inner geometry of singularities that are not bi-Lipschitz homeomorphic to a conic subset of some Euclidean space.
In particular the existence of  a \em fast loop \em (a $1$-cycle bounding a $2$-chain that
contracts the cycle faster than linearly \cite{BF,Fe,BNP}) or a
 \em separating set \em (a real hypersurface germ with small density at the singularity which disconnects the germ
of the singularity into two or more connected components each of large density at the singularity \cite{BF,BFN3})
are obstructions to the metric conicalness of the local inner geometry at the singular point.

This paper addresses the following two questions about the local inner geometry of complex analytic singularities:\\
- First, is it possible to identify objects that obstruct the metric conicalness of the local
inner geometry of complex analytic sets near a singular point? \\
- The Brian\c con-Speder family \cite{BFN3} is topologically constant but carries two Lipschitz structures. This motivates our second question:
how many local inner geometries near the singularities can a fixed homeomorphic type of a complex analytic isolated singularity carry? In other words,
can a single topological class have infinitely many Lipschitz models?

In order to answer these questions we introduce a new bi-Lipschitz invariant object, which we call a  \em
choking horn. \em This object is a horn, that is a continuous image of a cylinder over a Euclidean sphere along the
unit interval with the property that one boundary sphere maps onto the singular point and the tangent cone of the
image at the singular point is a real half-line, with the additional property that the intersection of this image
with any Euclidean $(2n-1)$-sphere of sufficiently small radius centered at the singular point is either a non-trivial
cycle in the homology of the link or a trivial cycle which can only be a boundary of a chain with  ``large" diameter.
It turns out that metrically conic singularities cannot admit a choking horn (Theorem \ref{thm:cones-donot-choke}). The choking
horns are higher dimensional analogs of so-called fast loops of the second kind, described in the paper of Birbrair, Neumann and Pichon \cite{BNP}. Moreover, the idea of
choking horn is closely related to Metric Homology \cite{BB1,BB2} and Vanishing Homology \cite{Va}, which study the families of homology cycles (trivial or nontrivial)
with respect to the metric properties near the singular points. Moreover this new object can be realized in some families of Brieskorn hypersurface singularities
as the real locus of the complex singularity. We show that the presence of choking horns in
the family of Brieskorn singularities $(A_{2k+1}^n)_{k\geq 1}$ also implies that any two disjoint elements of this family
cannot have the same local inner geometry at the singular point
%are bi-Lipschitz homeomorphic
(Theorem \ref{thm:not-bilip-homeo}), despite this family being topologically constant
(Theorem \ref{thm:homeo-not-bilip-homeo}).

\medskip
\noindent Notes: 1- Throughout this note, the expression \em bi-Lipschitz homeomorphism \em means \em bi-Lipschitz
and subanalytic homeomorphism. \em This is a harmless liberty since
we never deal with \em bi-Lipschitz homeomorphisms \em that are not subanalytic.
\\
2- Unless explicitly mentioned otherwise, any subset we will work with always comes equipped with the
inner metric.
\section{Elementary properties of choking horns}

Given the germ at the origin of a subset $Z$ of a Euclidean space, the {\it $t$-link $Z_t$ of $Z$} is the locus
of points of $Z$ whose (Euclidean) distance to the origin is equal to $t$.

Let $(X,\bo)$ be the germ of a connected subanalytic subset of an Euclidean space equipped with a metric $d$ which
is either the outer metric $d_{\rm outer}$, or the inner metric $d_{\rm inner}$.
%Let us denote by $d_{\rm inner}$ and $d_{\rm outer}$ the inner and the outer metric on $X$ respectively.

\begin{definition} A \em horn in $X$ \em is the image of a subanalytic continuous map
$$\phi\colon [0,1]\times \sphere^p\rightarrow X,$$
where $\sphere^p$ is the real unit sphere of positive dimension $p$, and such that
\begin{enumerate}
\item[1.] For any $(t,u)\in[0,1]\times \sphere^p$ the image $\phi(t,u)$ lies in $X_t$.
\item[2.] The tangent cone of the image of $\phi$ at $\bo$ is just a single real half-line.
\end{enumerate}
\end{definition}
We observe that a horn is subanalytic and in particular the mapping $\phi$ collapses the boundary
sphere $0\times \sphere^p$ onto the singular point $\bo$.

\begin{definition} Let $Y$ be horn in $(X,d)$. The subset $Y$ is said to \em choke $X$ \em
if for every $t$ small enough and any chain $\xi_t$ with $supp(\xi_t)\subset X_t$ and such that $\partial \xi_t = Y_t$,
then the $d$-diameter of $\xi_t$ satisfies
$$\diam(\supp(\xi_t))\geq Kt$$
for some real number $K>0$ independent of $t$. A horn which chokes $X$ is called a \em choking horn.\em
\end{definition}

The definition of a choking horn in $X$ depends a priori on the choice of the metric $d$ (outer or inner metric) on $X$.
Nevertheless this possible ambiguity is sorted out by the following:
\begin{proposition} Let $Y$ be a horn in $X$. Then, $Y$ is a choking horn with respect to the inner metric if, and only if,
it is a choking horn with respect to the outer metric on $X$.
\end{proposition}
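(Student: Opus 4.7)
The easy direction, outer-choking $\Rightarrow$ inner-choking, follows from $d_{\rm outer}\le d_{\rm inner}$ on $X$: this gives $\diam_{\rm outer}(S)\le\diam_{\rm inner}(S)$ for every $S\subseteq X$, so the very same constant $K$ works for both definitions.

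For the converse I would argue by contrapositive, exploiting in an essential way the single-direction tangent cone of the horn. Assume $Y$ is not outer-choking: there exist $t_n\downarrow 0$ and chains $\xi_n$ with $\supp\xi_n\subset X_{t_n}$, $\partial\xi_n=Y_{t_n}$, and $\delta_n:=\diam_{\rm outer}(\supp\xi_n)/t_n\to 0$. The hypothesis that the tangent cone of $Y$ at $\bo$ is the single half-line $\R_+ v$ forces the defining map $\phi$ to satisfy $\phi(t,u)=tv+o(t)$ uniformly in $u\in\sphere^p$, hence $Y_{t_n}\subset B(t_n v,o(t_n))$. Combined with $Y_{t_n}\subset\supp\xi_n$ and $\diam_{\rm outer}(\supp\xi_n)\le\delta_n t_n$, this yields $\supp\xi_n\subset B(t_n v,C\delta_n t_n)\cap X$ for a uniform $C>0$. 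I then claim a uniform Lipschitz comparison $d_{\rm inner}(p,q)\le C'\,d_{\rm outer}(p,q)$ for all $p,q\in\supp\xi_n$ and all large $n$, with $C'$ independent of $n$. After rescaling by $t_n^{-1}$ these chains sit in shrinking outer neighborhoods of the fixed point $v$ inside $X/t_n$, and $X/t_n$ subanalytically approaches the tangent cone $T$ of $X$ at $\bo$; uniformity of the local inner-outer Lipschitz comparison on the regular part of $T$ near $v$, combined with scale invariance of the ratio $d_{\rm inner}/d_{\rm outer}$, provides $C'$. Consequently $\diam_{\rm inner}(\supp\xi_n)\le C'\delta_n t_n$, so $\diam_{\rm inner}(\supp\xi_n)/t_n\to 0$ and $Y$ is not inner-choking.

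The main difficulty is the uniform inner-outer Lipschitz comparison in the shrinking outer tubes $B(t_n v,C\delta_n t_n)\cap X$. A global uniform comparison on the entire link $X_t$ fails in general for non-metrically-conic singularities, which is precisely the interesting regime of the paper. The argument is therefore necessarily local around the horn's tangent direction $v$: when $v$ lies in the regular stratum of the tangent cone, subanalytic smoothness of $X$ along $\R_+v$ away from $\bo$ gives the bound directly, whereas for singular directions one must invoke a Lipschitz stratification (Mostowski--Parusi\'nski) to extract uniform constants. It is exactly this concentration at a single tangent ray, built into the definition of horn, that makes the equivalence between the two metric versions of choking possible.
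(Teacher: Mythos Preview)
Your easy direction is fine and matches the paper. For the converse, however, the paper takes a much shorter and entirely global route that you explicitly rule out. Its key lemma (Lemma~\ref{lem:no-pb-diam}) says: fixing once and for all a pancake decomposition $\{\Lambda_i\}_{i=1}^{l}$ of $X$, there is a constant $L$ depending only on $l$ and on the bi-Lipschitz constants of the pancakes such that $\diam_{\rm inner}(Z)\le L\,\diam_{\rm outer}(Z)$ for \emph{every} connected subanalytic $Z\subset X$. Given $x,x'\in Z$ one connects them by a chain $u_0,\dots,u_k$ of points of $Z$, with $k\le l$, such that consecutive pairs lie in a common pancake; then $d_{\rm inner}(u_{j-1},u_j)\le a_{i_j}\lvert u_j-u_{j-1}\rvert\le a_{i_j}\,\diam_{\rm outer}(Z)$ and one sums. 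Applying this to $Z=\supp\xi_t$ (which may be taken connected since $Y_t$ is) turns inner-choking with constant $K$ into outer-choking with constant $K/L$ in one line.

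Your argument instead aims for a \emph{pointwise} comparison $d_{\rm inner}(p,q)\le C'\,d_{\rm outer}(p,q)$ inside the shrinking tubes $B(t_nv,C\delta_nt_n)\cap X$, via rescaling and convergence to the tangent cone. That is strictly stronger than the diameter comparison you actually need, and it is where the gap sits: when $v$ is a singular direction of $T_{\bo}X$ you offer only the name ``Lipschitz stratification (Mostowski--Parusi\'nski)'' rather than an argument, and uniform normal embedding in such tubes is not automatic. Your stated reason for localising---that ``a global uniform comparison on the entire link $X_t$ fails in general''---is true for the metrics on pairs of points (failure of normal embedding), but the paper's observation is precisely that the comparison for \emph{diameters of connected sets} does hold globally and uniformly, from the pancake decomposition alone. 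That is all the proof requires; the tangent direction of the horn, the rescaling limits, and the structure of $T_{\bo}X$ play no role.
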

\begin{proof} Whenever $Y$ is a choking horn with respect to the outer metric then it is a choking horn with
respect to the inner metric on $X$. The converse is a direct consequence of Lemma \ref{lem:no-pb-diam} below.
\end{proof}

\begin{lemma}\label{lem:no-pb-diam}
Let $Z$ be a connected subanalytic subset of $X$. Then, there exists a constant $L>0$ such that the inner-diameter of $Z$
%with respect to the inner metric
is at most $L$ times the outer-diameter of $Z$.
% with respect to the outer metric on $X$.
\end{lemma}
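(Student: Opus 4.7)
The plan is to invoke Kurdyka's pancake decomposition theorem for subanalytic sets. If $Z$ is unbounded in the outer metric the inequality is vacuous, so assume $Z$ is bounded and write $D := \diam(Z)$ for its outer diameter. Kurdyka's theorem provides a finite decomposition $Z = Z_1 \sqcup \cdots \sqcup Z_N$ into subanalytic pieces (``pancakes'' or L-regular cells) such that, for a single constant $C > 0$ depending only on $Z$, the intrinsic inner metric on each piece satisfies $d^{Z_i}_{\rm inner}(p,q) \le C \|p-q\|$ for all $p,q \in Z_i$. In particular, each pancake has inner diameter at most $CD$.

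The second step is to chain pancakes together using connectedness of $Z$. Consider the finite graph on $\{Z_1,\ldots,Z_N\}$ with an edge between $Z_i$ and $Z_j$ whenever $\overline{Z_i}\cap \overline{Z_j}\neq\emptyset$; the frontier condition for subanalytic decompositions, combined with connectedness of $Z$, makes this graph connected. Given $p,q\in Z$, one chooses a path of length at most $N-1$ in this adjacency graph from the pancake containing $p$ to the one containing $q$, and then assembles a rectifiable inner path in $Z$ by hopping from pancake to pancake through their common frontier. Each hop inside a single $Z_i$ contributes length at most $CD$ by L-regularity, so the total inner path length between $p$ and $q$ is bounded by $NCD$, which gives the lemma with $L = NC$.

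The main obstacle I expect is that the bridging points in $\overline{Z_i}\cap \overline{Z_j}$ need not lie in $Z$. I would handle this by an $\varepsilon$-approximation: for a common frontier point $r\in\overline{Z_i}\cap\overline{Z_j}$, pick sequences in $Z_i$ and $Z_j$ converging to $r$ and use the L-regularity bound in each pancake to keep the accumulated inner length within $CD+\varepsilon$ per hop; letting $\varepsilon\to 0$ and summing over the at most $N$ hops still produces a rectifiable curve in $Z$ whose length is bounded by a constant multiple of $D$. Apart from this approximation step, the argument is a clean consequence of Kurdyka's decomposition theorem and the subanalytic frontier property, to which the genuinely delicate analytic content is delegated.
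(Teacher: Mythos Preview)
Your approach via pancake decomposition and chaining is exactly the mechanism the paper uses, but there is one crucial difference that matters: you apply Kurdyka's decomposition to $Z$, whereas the paper applies it to the ambient set $X$. Because your number $N$ of cells and your L-regularity constant $C$ come from a decomposition of $Z$, the constant $L=NC$ you produce depends on $Z$. Read literally the lemma allows this, but then the statement is almost content-free (any bounded connected subanalytic set has finite inner diameter, so some $L$ exists). The point of the lemma, and what is actually used in Proposition~2.3 where $Z=\supp(\xi_t)$ ranges over \emph{all} possible chains, is that $L$ depends only on $X$. Without this uniformity the implication ``choking for the inner metric $\Rightarrow$ choking for the outer metric'' does not follow.

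The fix is simple and also dissolves your $\varepsilon$-approximation worry. Take once and for all a pancake decomposition $\{\Lambda_i\}_{i=1}^{l}$ of $X$ with bilipschitz constants $a_i$. Given $x,x'\in Z$, run your connectedness argument on the cover of $Z$ by the closed sets $\overline{\Lambda_i}\cap Z$: there is a chain $\Lambda_{i_0},\ldots,\Lambda_{i_k}$ with $k\le l$, $x\in\overline{\Lambda_{i_0}}$, $x'\in\overline{\Lambda_{i_k}}$, and bridge points $u_j\in\overline{\Lambda_{i_{j-1}}}\cap\overline{\Lambda_{i_j}}\cap Z$. Consecutive points $u_{j-1},u_j$ lie in the same $\overline{\Lambda_{i_{j-1}}}$, so $d_{\rm inner}(u_{j-1},u_j)\le a_{i_{j-1}}|u_{j-1}-u_j|$, and since all $u_j\in Z$ each factor $|u_{j-1}-u_j|$ is at most $\diam(Z)$. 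Summing gives $d_{\rm inner}(x,x')\le l\cdot\max_i a_i\cdot\diam(Z)$, a bound depending only on $X$. Note that the connecting paths run inside the pancakes of $X$, not of $Z$; since the inner metric in the lemma is that of $X$, there is no need for the paths (or the limits you were taking) to remain in $Z$.
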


\begin{proof} Let $\{\Lambda_i\}_{i=1}^l$ be a subanalytic pancake decomposition of $X$ \cite{Ku,KuO,Mo,Pa}.
Let $x,x'$ be two points on $Z$ such that $d_{\rm inner}(x,x')$ is equal to the inner-diameter of $Z$.
%with respect to the inner metric on $X$.
Let $u_0,\dots,u_k$ be a finite sequence of points of $X$ such that $u_0=x$, $u_k=x'$
and $u_j,u_{j-1}$ belong to a same pancake $\Lambda_{i_j}$ of $X$.
Let $\diam(Z)$ be the outer-diameter of $Z$.
By definition of a pancake, we deduce
$$d_{\rm inner}(x,x')\leq \sum_{j=1}^{k} a_j|u_{j}-u_{j-1}|\leq L \diam(Z)$$
where $L = k \max_j\{a_j\}$, for positive real numbers $a_j$, each attached to the corresponding pancake.
\end{proof}

%added following sentences
We remark that the lemma above bounding the inner metric from above in terms of the outer metric is what makes the
choice of diameter in measuring the size of bounding cycles in the definition of a choking horn attractive. Other
measures of size, such as volumes, do not seem to admit such easily accessible bounds. The proposition and lemma
above immediately yield the following.

\begin{proposition} Let $(X,\bo)$ be the germ of a connected subanalytic subset of an Euclidean space
equipped with the inner metric.
\begin{enumerate}
\item[1.] The existence of a choking horn in $X$ is invariant by subanalytic bi-Lipschitz homeomorphisms;
\item[2.] If $X$ admits a choking horn $Y$ contained in a sub-germ $W$, then the subset $Y$ is also a choking horn in $W$.
\end{enumerate}
\end{proposition}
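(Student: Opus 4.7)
Assertion (2) is almost immediate. If $\xi_t$ is a chain with $\supp(\xi_t)\subset W_t=W\cap X_t$ and $\partial \xi_t=Y_t$, then because $W\subset X$ the same $\xi_t$ is also a chain in $X_t$ with the same boundary $Y_t$, and the choking of $Y$ in $X$ gives $\diam_{d_X}(\supp \xi_t)\ge Kt$. The outer diameter does not depend on the ambient set, while the inner metric on $W$ pointwise dominates the restriction to $W$ of the inner metric of $X$ (fewer paths are allowed). In both cases $\diam_{d_W}(\supp \xi_t)\ge \diam_{d_X}(\supp \xi_t)\ge Kt$, which is exactly the choking of $Y$ in $W$.

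For assertion (1), let $f:(X,\bo)\to(X',\bo')$ be a subanalytic bi-Lipschitz homeomorphism, with constant $L$, and let $\phi$ parametrize the choking horn $Y\subset X$. My first step is to upgrade the bi-Lipschitz property to the outer metric: applying Lemma \ref{lem:no-pb-diam} to shrinking connected subanalytic neighborhoods of pairs of points on each of $X$ and $X'$ yields $d_{\rm inner}\asymp d_{\rm outer}$ on both germs, and hence $|f(x)-f(y)|\asymp |x-y|$ and in particular $|f(x)|\asymp |x|$ near the origin. Set $Y':=f(Y)$, which is subanalytic.

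Next, I produce a horn parametrization of $Y'$ by reparametrizing $f\circ\phi$. For each $u\in\sphere^p$ the subanalytic continuous function $r_u(t):=|f(\phi(t,u))|$ vanishes at $0$ and satisfies $r_u(t)\asymp t$, so a standard subanalytic radial reparametrization (shrinking the interval of parameters if necessary) yields a subanalytic continuous map $\phi':[0,1]\times \sphere^p\to X'$ with $\phi'(s,u)\in X'_s$ whose image coincides with $Y'$ in a neighborhood of $\bo'$. The tangent cone of $Y'$ at $\bo'$ is still a single real half-line, because the condition that the outer diameter of $Y_t$ is $o(t)$ transfers to $Y'_{t'}$ via the outer bi-Lipschitz inequality, forcing all limits of $f(y)/|f(y)|$ as $y\to\bo$ in $Y$ to coincide.

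Finally, I check that $Y'$ chokes $X'$. Given a chain $\xi'_t\subset X'_t$ with $\partial\xi'_t=Y'_t$, the $f$-pullback $\xi_t:=f^{-1}(\xi'_t)$ is a chain in $X$ whose support lies in the annulus $\{c_1 t\le|x|\le c_2 t\}$ and whose boundary $f^{-1}(Y'_t)\subset Y$ does \emph{not} lie on any single Euclidean sphere. To apply the choking of $Y$ in $X$, I radially project $\xi_t$ onto a true sphere $X_s$ with $s\asymp t$; this projection only changes the diameter by a multiplicative constant, which is exactly where Lemma \ref{lem:no-pb-diam} is used to absorb the (inner) radial corrections. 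The choking of $Y$ in $X$ then forces $\diam(\xi_t)\ge K''t$, and the bi-Lipschitz bound on $f$ gives $\diam(\xi'_t)\ge L^{-1}K''t$, as required. The main obstacle is precisely the sphere-versus-annulus mismatch created by the fact that bi-Lipschitz maps do not preserve Euclidean $t$-links exactly; Lemma \ref{lem:no-pb-diam} is the tool that turns this mismatch into a harmless multiplicative constant, which explains the authors' remark that the proposition follows ``immediately'' from what precedes.
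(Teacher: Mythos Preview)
Your argument for part (2) is correct and complete.

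For part (1), the paper offers no proof beyond the word ``immediately'', so your attempt to spell things out is the right instinct; however, two of your steps do not go through as written.

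First, Lemma~\ref{lem:no-pb-diam} does \emph{not} yield $d_{\rm inner}\asymp d_{\rm outer}$ on $X$. The constant $L$ there depends on the particular connected subset $Z$, and a subanalytic germ need not be normally embedded (the real cusp $\{y^2=x^3\}\subset\R^2$ is the standard counterexample: points on opposite branches at height $t$ have outer distance $\asymp t^{3/2}$ but inner distance $\asymp t$). What you \emph{can} extract is the weaker fact $|f(x)|\asymp|x|$, because $d_{\rm inner}(x,\bo)\asymp|x|$ holds on any connected subanalytic germ; this is enough for the annulus estimates but not for the outer bi-Lipschitz upgrade you claim.

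Second, and more seriously, your ``radially project $\xi_t$ onto a true sphere $X_s$'' step presupposes a retraction of an annulus in $X$ onto a single $t$-link that distorts diameters only by a fixed multiplicative constant. A general subanalytic germ is only \emph{topologically} conic; the homeomorphism with the cone over its link need not be bi-Lipschitz --- indeed, that failure is the entire subject of the paper. So you cannot move the pulled-back chain from the annulus $\{c_1t\le|x|\le c_2t\}$ to $X_s$ while controlling its diameter, and without that the choking hypothesis on $Y$ in $X$ never gets applied.

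A complete proof of (1) needs an extra ingredient beyond Proposition~2.3 and Lemma~\ref{lem:no-pb-diam}: either a reformulation of the choking condition using inner-metric annuli (manifestly bi-Lipschitz invariant) together with a proof that this is equivalent to the Euclidean $t$-link version, or an appeal to a subanalytic \emph{Lipschitz} trivialisation of $X$ over a short interval of radii (so that $X\cap\{0<|x|<\epsilon\}$ is bi-Lipschitz to $(0,\epsilon)\times X_{\epsilon/2}$). Either route works, but neither is literally immediate from what precedes; the paper's ``immediately'' should be read as ``routinely, given the standard subanalytic toolbox'' rather than as a two-line deduction.
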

%
%at
We recall that a subanalytic germ at the origin of an Euclidean space equipped with the intrinsic metric
is \em metrically conic \em if it is subanalytically bi-Lipschitz homeomorphic to the cone over its link at the origin %%added
(where the cone over a subset K of Euclidean space is the set of points $\{tx: x \in K, t\in [0,1]\}$).

\smallskip
The first result of this note establishes that choking horns are obstructions to the metric
conicalness of the local inner geometry at the singularity:
\begin{theorem}\label{thm:cones-donot-choke}
A closed subanalytic metrically conic set-germ $(X,0)$ does not admit a choking horn.
%Let $X$ be a closed subanalytic and metrically conic germ at $\bo$.
%%(that is, there is a subanalytically bi-Lipschitz homeomorphism of $(X,0)$ to a metric cone over the link of $(X,0)$), and
%Let $Y\subset X$ be a subanalytic homeomorphic continuous image a closed cylinder over a Euclidean sphere along a
%%closed real half-line.
%
%
%% $$\phi\colon [0,1]\times \sphere^d\stackrel{\thicksim}\longrightarrow Y\subset X$$
%
%
%If $Y$ is a horn in $X$, then $Y$ does not choke $X$.
\end{theorem}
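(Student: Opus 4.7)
The approach is to exhibit, for each sufficiently small $t>0$, an explicit chain $\xi_t\subset X_t$ with $\partial\xi_t=Y_t$ whose diameter is $o(t)$, thereby violating the defining inequality of a choking horn. By the invariance of the existence of a choking horn under subanalytic bi-Lipschitz homeomorphisms, one may assume without loss of generality that $X$ is literally the Euclidean cone $C(L)=\{sx:s\in[0,1],\ x\in L\}\subset\R^M$ over its link $L\subset\sphere^{M-1}$; writing points of $X$ in cone coordinates $(s,x)$, one has $X_t=\{t\}\times L$.

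Suppose $Y=\phi([0,1]\times\sphere^p)$ is a choking horn in $C(L)$. Writing $\phi(t,u)=(t,\psi(t,u))$ for a subanalytic continuous $\psi\colon(0,1]\times\sphere^p\to L$, the hypothesis that the tangent cone of $Y$ at $\bo$ is a single half-line forces the existence of a unique $v_0\in L$ such that $\psi(t,u)\to v_0$ as $t\to 0^+$; by compactness of $\sphere^p$ this convergence is automatically uniform in $u$, so
\[
\delta(t):=\sup_{(s,u)\in(0,t]\times\sphere^p}d_L(\psi(s,u),v_0)\longrightarrow 0\quad\text{as } t\to 0^+.
\]
For each small $t$, define $\xi_t$ as the image of the subanalytic continuous map $\Psi_t\colon[0,t]\times\sphere^p\to X_t$ given by $\Psi_t(s,u)=(t,\psi(s,u))$ for $s>0$ and $\Psi_t(0,u)=(t,v_0)$; this is the radial projection onto $X_t$ of the truncated horn $Y\cap X_{\leq t}$. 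The associated subanalytic $(p+1)$-chain has boundary exactly $Y_t=\Psi_t(\{t\}\times\sphere^p)$, since the opposite face $\Psi_t(\{0\}\times\sphere^p)=\{(t,v_0)\}$ collapses to a single point and thus contributes nothing in dimension $p$.

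Finally, any two points of $\xi_t$ take the form $(t,\psi(s,u))$ and $(t,\psi(s',u'))$ with $s,s'\in[0,t]$ (using the convention $\psi(0,\cdot)=v_0$), so their outer Euclidean distance is $t\,|\psi(s,u)-\psi(s',u')|\leq 2t\,\delta(t)$. Applying Lemma \ref{lem:no-pb-diam} to the connected subanalytic set $\xi_t$ yields a constant $C>0$ with $\diam(\xi_t)\leq C\,t\,\delta(t)$, hence $\diam(\xi_t)/t\to 0$, contradicting the choking inequality $\diam(\xi_t)\geq Kt$. The most delicate point of the argument is the justification that $\partial\xi_t=Y_t$ exactly, i.e., that the collapsed face at $s=0$ does not contribute; in the framework of subanalytic chains (or integral currents) this is automatic since a $p$-chain supported on a single point is zero, but if one prefers to be fully explicit, one may truncate $\Psi_t$ at $s=\epsilon>0$ and cap off the resulting small cycle $\Psi_t(\{\epsilon\}\times\sphere^p)$ sitting inside $B((t,v_0),t\,\delta(\epsilon))$ using the subanalytic local conic structure of $L$ at $v_0$, at an additional diameter cost still of order $t\,\delta(\epsilon)=o(t)$.
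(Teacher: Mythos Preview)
Your proof is correct and follows the paper's overall strategy: reduce to the literal cone $X=C(L)$, use the horn hypothesis that $T_\bo Y$ is a half-line to see that $\frac{1}{t}Y_t$ shrinks to a single point $v_0\in L$, and then exhibit for each small $t$ a chain in $X_t$ bounding $Y_t$ with diameter $o(t)$.

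The only genuine difference is in how the bounding chain is produced. The paper fixes a contractible neighbourhood $U$ of $v_0$ in $L$, invokes (without further justification) the existence of a chain $\xi_t\subset U$ with $\partial\xi_t=\frac{1}{t}Y_t$ and $\diam(\supp\xi_t)\le k\,\diam(\frac{1}{t}Y_t)$, and then rescales by $t$. You instead construct the chain explicitly as the radial projection onto $X_t$ of the truncated horn $Y\cap X_{\le t}$: the parametrisation $\psi$ itself furnishes the null-homotopy of $\frac{1}{t}Y_t$ to the point $v_0$ in $L$. This is a pleasant observation and makes the argument more self-contained, since it sidesteps the paper's implicit claim that inside a contractible subanalytic set one can always bound a cycle by a chain of comparable diameter. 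The price is a marginally coarser estimate (your diameter is controlled by $\sup_{s\le t}\sup_u d_L(\psi(s,u),v_0)$ rather than by $\diam(\frac{1}{t}Y_t)$), but both quantities are $o(1)$, which is all that is needed. One small remark: your appeal to Lemma~\ref{lem:no-pb-diam} tacitly uses that the constant there depends only on a pancake decomposition of $X$ and not on the particular subset $\xi_t$; alternatively, you could simply invoke Proposition~2.3 and work entirely with the outer metric.
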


\begin{proof} By hypothesis we can assume that $X$ is the cone over its link with vertex at $\bo$.
Let us consider a subanalytic continuous mapping
$$\phi\colon [0,1]\times \sphere^d\stackrel{\thicksim}\longrightarrow Y\subset X$$
parameterizing a horn $Y$ in $X$. Let $Y_t$ be the $t$-link of $Y$. Since $X$ is a cone
the Hausdorff limit of $\frac{1}{t} Y_t$ as $t\to 0$, taken in the Euclidean unit sphere of the ambient
Euclidean space, is a single point corresponding to a point $x$ in the link of $X$. Hence, for $t>0$ small enough,
there exists a contractible neighborhood $U$ of $x$ in the link of $X$ such that $\frac{1}{t} Y_t\subset U$. Since $U$
is contractible, there is a chain $\xi_t$ with $\supp(\xi_t)\subset U$ satisfying:
$$\diam (\supp(\xi_t))\leq k \diam (\supp(\partial \xi_t)) \ \mbox{and} \ \partial \xi_t =\frac{1}{t}Y_t.$$
Now, consider the chain $\eta_t=t\xi_t$, and push it to the $t$-link of $X$. Note that, $\partial \eta_t = Y_t$ and
 \begin{eqnarray*}
 \diam (\supp(\eta_t)) & = & t\cdot \diam (\supp(\xi_t)) \\
                     & \leq & t\cdot k\cdot \diam (\frac{1}{t}Y_t) \\
                     & = & k\cdot \diam (Y_t) \\
 \end{eqnarray*}
and, since the tangent cone of $Y$ at $\bo$ is just a real half-line, the diameters $\diam (Y_t)$ converge to $0$ (as $t\to 0$) faster
than linearly, hence $\diam (\supp (\eta_t))$ tends to $0$ (as $t\to 0$) faster than linearly as well.
This means exactly that $Y$ does not choke $X$.
\end{proof}
\section{$X_k^n$ Brieskorn hypersurface singularities}

Let $X_k^n$ be the complex isolated hypersurface singularity of $\C^{n+1}$ defined as
$$X_k^n=\{(z_1,\dots,z_n,w)\in\C^{n+1} \ : \ z_1^2+\ldots z^2_n-w^{k}=0\}.$$
We are mostly interested in $X_k^n$ as a germ at the origin $\bo\in\C^{n+1}$.

\smallskip
The next theorem shows
%is our second result and it comes to show that Theorem \ref{thm:cones-donot-choke}
%is not empty, in other words
that choking horns exist in the simplest Brieskorn singularities.
%are indeed existing objects featuring
%some inner geometric properties of the ambient singularity they lie in.
%
%
\begin{theorem}\label{thm:Xkn-choked}
For each $n>1$ and each $k>2$, the hypersurface $X^k_n$ admits a choking horn, namely
the real part $X_k^n \cap (\R\times 0)^{n+1}$.
\end{theorem}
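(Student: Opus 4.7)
My plan is to exhibit the real horn explicitly and then establish choking via a rescaling argument that identifies a neighbourhood of the apex of the tangent cone of $Y$ with the Milnor fibre of $\sum_{i=1}^n z_i^2$. Parametrise the real part of $X_k^n$ restricted to $w\geq 0$ by $\phi\colon [0,1]\times\sphere^{n-1}\to X_k^n$, $\phi(s,u)=(s^{k/2}u,s)$ with $u\in\sphere^{n-1}\subset\R^n$. Since $k>2$, one has $\|\phi(s,u)\|=s\sqrt{1+s^{k-2}}$, and the tangent cone of the image at the origin is the half-line spanned by $p:=(0,\dots,0,1)$, so a reparametrisation by distance to $\bo$ makes $\phi$ a horn in the sense of the definition. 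The $t$-link $Y_t$ is a round real $(n-1)$-sphere of outer diameter of order $t^{k/2}$ located near $tp\in X_t$.

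The key observation is that, after rescaling by $1/t$, the singularity of the tangent cone at $p$ undergoes a Milnor-fibre smoothing. The rescaled link $\tilde X_t := t^{-1}X_t\subset\sphere^{2n+1}$ is cut out by $\sum_{i=1}^n \zeta_i^2 = t^{k-2}\zeta_{n+1}^k$; as $t\to 0$ it degenerates to the link of the quadric tangent cone $\{\sum\zeta_i^2=0\}$, whose only singularities on the unit sphere are $\pm p$. Near $p$ the further rescaling $\eta_i := t^{-(k-2)/2}\zeta_i$ ($i=1,\dots,n$) converts the defining equation of $\tilde X_t$ into $\sum\eta_i^2 = \zeta_{n+1}^k$, a small smooth perturbation of $\sum\eta_i^2=1$. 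Because $n>1$, the affine quadric $\{\sum\eta_i^2=1\}$ is diffeomorphic to the tangent bundle of $\sphere^{n-1}$ and deformation retracts onto its zero section; in these $\eta$-coordinates the rescaled slice $\tilde Y_t$ is carried to a small perturbation of this zero section, so it represents a generator of the middle homology.

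To obtain the choking estimate, I fix $\delta>0$ small enough that, for all sufficiently small $t$, the region $W_t := B(p,\delta)\cap\tilde X_t$ (where $B(p,\delta)$ is the open Euclidean ball around $p$ in $\C^{n+1}$) is, in the $\eta$-coordinates, a bounded piece of the perturbed Milnor fibre that deformation retracts onto $\tilde Y_t$. Setting $K := \delta/3$, suppose some chain $\xi_t\subset X_t$ had $\partial\xi_t=Y_t$ and $\diam(\supp\xi_t)<Kt$. Then the rescaled chain $\tilde\xi_t := t^{-1}\xi_t$ would have $\zeta$-diameter at most $\delta/3$; since $\tilde Y_t$ lies within $\delta/3$ of $p$ for $t$ small, we would have $\supp\tilde\xi_t\subset B(p,2\delta/3)\cap\tilde X_t\subset W_t$. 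But then $\tilde\xi_t$ would be a chain in $W_t$ bounding a generator of $H_{n-1}(W_t)=\Z$, a contradiction.

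The main technical obstacle is the uniform-in-$t$ identification of $W_t$ with a deformation retract of a bounded piece of the affine complex quadric: one must verify that the $\eta$-rescaling yields a smooth family of Milnor fibres as $t\to 0$, and that the real slice $\tilde Y_t$ is carried to the zero section, so that the non-vanishing of $H_{n-1}(T\sphere^{n-1})$ furnishes the topological obstruction. Once this identification is in hand, the remaining scaling estimates are routine bookkeeping.
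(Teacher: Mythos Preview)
Your proof is correct and rests on the same core idea as the paper's: the $t$-slice $Y_t$ is the real vanishing sphere in the Milnor fibre of $z_1^2+\cdots+z_n^2$, hence homologically essential in any small neighbourhood, so a bounding chain must spread out to diameter of order $t$. The paper packages the obstruction slightly differently: instead of your local ball $W_t$ about $p$ in the rescaled link, it shows (via an explicit retraction of the affine quadric $\{\sum z_i^2=c\}$ onto its real locus, i.e.\ your ``zero section of $T\sphere^{n-1}$'' written out by hand) that $Y_t$ is not null-homotopic in $X_k^n\setminus\{w=0\}$, and then observes that any chain in $X_t$ with $\partial\xi_t=Y_t$ must therefore meet $\{w=0\}$, which sits at distance $\sim t$ from $Y_t$; this yields the explicit constant $K=\pi/4$. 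Your rescaling viewpoint is conceptually clean, while the paper's explicit retractions sidestep the uniform-in-$t$ Milnor-fibre identification you correctly flag as the main technical point to verify.
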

\begin{proof}
%What we are going to show is far more explicit than what we stated.
%Writing the complex coordinates $z_j= x_j+i u_j$ and $w =y + iv$,
We are going to prove that the real algebraic hypersurface
$$X_k^n(\R):=\{(x_1,\ldots,x_n,y)\in\R^{n+1} \ : \ x_1^2+\ldots+ x^2_n-y^{k}=0\}$$
embedded as the real part $X_k^n \cap \R^{n+1}$ of $X_k^n$, where $\R^{n+1}\subset \C^{n+1}$
is the usual embedding of $\R^{n+1}$ in $\C^{n+1}$, is a choking horn in $X_k^n$. For this purpose, let us consider the parameterization
$$\phi\colon [0,1]\times \sphere^{n-1}\rightarrow X_k^n$$
of the horn $X_k^n(\R) \subset \C^{n+1}$ defined by $\phi(t,u_1,\ldots,u_n)=((t^{\frac{k}{2}}u_1,0)\ldots,(t^{\frac{k}{2}}u_n,0),(t,0))$
where $\sphere^{n-1}$ is the Euclidean unit sphere centered at origin of $\R^n$.
%We first easily observe that the image of $\phi$ is a horn in.

\smallskip
\noindent{\tt Claim 1.} The $t$-link of $X_k^n(\R)$ is not contractible in $X_k^n\setminus\{w=0\}.$

We start the proof of this claim by showing that for $0 < t\leq 1$ the subset
$$\{(z_1,\dots,z_n)\in \C^n \ : \ z_1^2+\ldots +z_n^2=t\}$$
retracts within $\C^n\setminus\{z_1^2+\ldots +z_n^2=0\}$ to
$$\{(z_1,\dots,z_n)\in \C^n \ : \ z_1,\ldots,z_n \in \R \ \mbox{and} \ z_1^2+\ldots z_n^2\geq t\}.$$
In fact, if we define $z_j:=u_j+iv_j$ with $u_j,v_j\in\R$, then
$$z_1^2+\ldots +z_n^2=t \Leftrightarrow u_1^2+\ldots +u_n^2 - (v_1^2+\ldots +v_n^2) =t \ \mbox{and} \ u_1v_1+\ldots +u_nv_n=0 .$$
On the points $(z_1,\dots,z_n)$ such that $z_1^2+\ldots +z_n^2=t$ we consider the retraction
$$(t,(z_1,\ldots,z_n)) \ \longmapsto (u_1+itv_1,\dots,u_n+itv_n).$$
Then
\begin{eqnarray*}
(u_1+itv_1)^2+\ldots +(u_n+itv_n)^2 &=& u_1^2+\ldots +u_n^2 - t^2(v_1^2+\ldots +v_n^2) -2i(u_1v_1+\ldots +u_nv_n) \\
&=& t + (1-t^2)(v_1^2+\ldots +v_n^2) \\
&\geq t&
\end{eqnarray*}

\noindent
The proof of the claim will be finished once we have showed that the following subset
$$\{(z_1,\dots,z_n)\in \C^n \ : \ z_1,\ldots,z_n \in \R \ \mbox{and} \ z_1^2+\ldots + z_n^2\geq t\}$$
retracts within itself to the subset
$$\{(z_1,\dots,z_n)\in \C^n \ : \ z_1,\ldots,z_n \in \R \ \mbox{and} \ z_1^2+\ldots + z_n^2= t\}.$$
In fact, for each point $(z_1,\dots,z_n)\in \C^n$ such that $z_1,\ldots,z_n \in \R$ and
$z_1^2+\ldots + z_n^2=T\geq t$, we use the $\R_+$-action on $X^n_k$ mapping $(z_1,\ldots,z_n,T)$ to $(s^kz_1,\ldots,s^kz_n,s^2T)$
where $\displaystyle s=\sqrt[2k]{\frac{t}{T}}.$ Thus, we see that the subset
$$X_k^n(t) := \{(z_1,\dots,z_n)\in \C^n \ : \ z_1^2+\ldots +z_n^2=t\}$$
retracts to $X^n_k(\R)$ in $X^n_k\setminus\{w=0\}$.
Let $g\colon\C^n,\bo\rightarrow\C,0$ be the function defined as $g(z_1,\ldots,z_n)=z_1^2+\ldots +z_n^2$.
Since for non-zero $t$ the subset $X_k^n(t)$
%
%
%$$\{(z_1,\dots,z_n)\in \C^n \ : \ z_1^2+\ldots +z_n^2=t\}$$
%
%
is the Milnor Fiber of the function $g$, we deduce this subset
is not contractible in $X_k^n(\R)\setminus\{w=0\}.$ Thus Claim 1 is proved.

\medskip
The next argument will end the proof the theorem.
If $\xi_t$ is a chain with support contained in the $t$-link of $X_k^n$ and such that its boundary
$\partial  \xi_t$ is the $t$-link of $X_k^n(\R)$, then
$$\lim_{t\to 0}\frac{1}{t}\diam(\supp(\xi_t))=\frac{\pi}{2}.$$
In particular, for $t>0$ small enough, we get $\diam(\supp(\xi_t))\geq\frac{\pi}{4}t.$
\end{proof}
\begin{theorem}\label{thm:not-bilip-homeo}
For each $n>1$ and any pair $k >l >1$, the complex isolated hypersurface singularities $X_k^n$ and $X_l^n$
are not subanalytically bi-Lipschitz homeomorphic.
\end{theorem}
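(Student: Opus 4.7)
By contradiction, assume that $F\colon(X_k^n,\bo)\to(X_l^n,\bo)$ is a subanalytic, inner bi-Lipschitz homeomorphism with $F(\bo)=\bo$ and $k>l>1$. The plan is to refine quantitatively the bi-Lipschitz invariance of choking horns established in Section 2 by tracking the asymptotic rate at which the $t$-link of a choking horn contracts; this rate should equal $t^{m/2}$ in $X_m^n$, and its preservation under inner bi-Lipschitz maps then separates different values of $m$.

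From the explicit parametrization used in the proof of Theorem \ref{thm:Xkn-choked}, the real horn $Y=X_k^n(\R)\subset X_k^n$ has $t$-link $Y_t$ a round $(n-1)$-sphere of Euclidean radius $\sim t^{k/2}$, lying in the hyperplane $\{w=y(t)\}$ with $y(t)\sim t$. In particular, $\diam(Y_t)\leq C\, t^{k/2}$ for some $C>0$. If $L$ denotes a bi-Lipschitz constant of $F$, then $F(Y_t)\subset X_l^n$ has inner diameter at most $LC\,t^{k/2}$, while its points lie at inner distance $\sim t$ from $\bo$ in $X_l^n$ (using the standard comparability of inner distance to a singular point with Euclidean distance on subanalytic germs). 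By the bi-Lipschitz invariance of choking horns (the proposition before Theorem \ref{thm:cones-donot-choke}), $F(Y)$ is itself a choking horn in $X_l^n$.

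The crux of the argument is the following key lemma: there exists $c>0$ such that every choking horn $Z$ in $X_l^n$ satisfies $\diam(Z_t)\geq c\,t^{l/2}$ for all $t>0$ sufficiently small. Granted this, applied to the choking horn $F(Y)$, one obtains $c\,t^{l/2}\leq LC\,t^{k/2}$, i.e., $t^{(k-l)/2}\geq c/(LC)$, which fails as $t\to 0^+$ since $k>l$, yielding the contradiction. The main obstacle is proving the key lemma. The idea is to extend Claim 1 from the proof of Theorem \ref{thm:Xkn-choked}: any choking $(n-1)$-cycle $Z_t$ at the $t$-link of $X_l^n$ must represent a non-trivial class in the $(n-1)$-homology of the link away from $\{w=0\}$, and this homology is generated, up to Milnor monodromy, by the real vanishing sphere of the slice $\{z_1^2+\cdots+z_n^2=y^l\}$ at $w=y\sim t$, which has Euclidean diameter $\sim t^{l/2}$. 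A thick-thin analysis of the link of $X_l^n$ around the real $w$-axis should show that the tubular neighborhood of this vanishing sphere has transverse thickness $\sim t^{l/2}$, so that any subanalytic $(n-1)$-cycle of inner diameter strictly below that scale sits in a contractible subset of the link and therefore bounds a chain of diameter $o(t)$, contradicting the choking condition $\diam(\supp(\xi_t))\geq Kt$. Making this thick-thin description of $X_l^n$ fully quantitative is the technical heart of the proof.
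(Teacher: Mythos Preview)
Your plan is exactly the paper's: transport the real horn $X_k^n(\R)$ to $X_l^n$, use that its $t$-links have diameter $\sim t^{k/2}$, and contradict a lower bound $\diam(Z_t)\gtrsim t^{l/2}$ for choking horns $Z$ in $X_l^n$. The gap is that you do not prove this lower bound; you defer it to a ``thick-thin analysis'' that you yourself flag as the unfinished technical heart. The paper fills this gap much more cheaply, using only the weighted homogeneity of $X_l^n$. First it shows the tangent half-line of $F(Y)$ must lie in the $w$-axis: near any direction with a nonzero $z_i$-component the projection $X_l^n\to\{z_i=0\}$ is bi-Lipschitz (an elementary derivative bound), so that region is metrically conic and carries no choking horn by Theorem~\ref{thm:cones-donot-choke}. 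You skip this localisation step, yet your sketch tacitly assumes it when you place $Z_t$ near the vanishing sphere over the $w$-axis. Second, once $F(Y)$ hugs the $w$-axis, the paper applies the $\R_+$-action $s\cdot(z,w)=(s^{l/2}z,sw)$ with $s=t^{-1}$: the $t$-level of $F(Y)$ is pushed into the $1$-slice of $X_l^n$ while the $z$-coordinates get multiplied by $t^{-l/2}$, so pairs of points land at distance $\lesssim t^{-l/2}\cdot t^{k/2}\to 0$ and the rescaled cycle sits in a contractible ball of the $1$-slice, contradicting choking. That is precisely your key lemma, obtained in two lines from quasi-homogeneity rather than from any tubular-neighbourhood estimate.

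A smaller issue: you pass from $\diam(F(Y_t))\le LC\,t^{k/2}$ to the same bound for $\diam\bigl((F(Y))_t\bigr)$, but $F$ only preserves distance to $\bo$ up to a constant, so $(F(Y))_t$ is the image of a piece of $Y$ spread over radii comparable to $t$, whose diameter picks up a radial contribution of order $t$, not $t^{k/2}$. The paper's formulation sidesteps this by comparing only the $z$-coordinates of pairs of points after the weighted rescaling; in your formulation you would need an extra reparametrisation argument before the inequality $c\,t^{l/2}\le LC\,t^{k/2}$ is legitimate.
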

Before getting into its proof, a most remarkable consequence of Theorem \ref{thm:not-bilip-homeo} is that it answers
the following question: \em can a given local complex singularity topological type carry different
local inner geometries at the singular points? \em
\begin{theorem}\label{thm:homeo-not-bilip-homeo}
For any odd integer $n$, the family of germs $\{(X_{2k+1}^n,\bo)\}_{k>0}$ admits infinitely many bi-Lispchtiz classes, despite
being of constant topological type.
\end{theorem}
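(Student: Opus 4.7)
The statement combines two claims about the subfamily $\{X_{2k+1}^n\}_{k\geq 1}$: that it has constant topological type, and that it contains infinitely many bi-Lipschitz classes. The bi-Lipschitz half is an immediate corollary of Theorem \ref{thm:not-bilip-homeo}: applied to any two distinct members $X_{2k+1}^n$ and $X_{2l+1}^n$ with $2k+1 > 2l+1 > 1$, it asserts that they are not subanalytically bi-Lipschitz homeomorphic, so no bi-Lipschitz class of germs can contain more than one member of the subfamily, and thus infinitely many classes appear. The substance of the theorem is therefore the topological constancy.

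For the topological part, my plan is to show that every germ $(X_{2k+1}^n, \bo)$ is homeomorphic to the closed ball $(\ball^{2n}, \bo)$. By Milnor's local conical structure theorem for isolated complex hypersurface singularities, each germ is homeomorphic to the topological cone on its link $K_k = X_k^n \cap \sphere^{2n+1}_\epsilon$, so it suffices to establish that $K_{2k+1}$ is homeomorphic to $\sphere^{2n-1}$ whenever $n \geq 3$ is odd. This link is the Brieskorn manifold $\Sigma(2, 2, \ldots, 2, k)$ with $n$ copies of $2$. Brieskorn's classical theorem (valid for real link dimension at least $5$, i.e. for $n \geq 3$) asserts that this manifold is homeomorphic to the standard $(2n-1)$-sphere as soon as the Alexander invariant $\Delta(1) = \pm 1$, where $\Delta(t)$ is the characteristic polynomial of the Milnor monodromy on the middle homology of the Milnor fibre. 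The eigenvalues of that monodromy are the products $(-1)^n \zeta_k^j$ for $j = 1, \ldots, k-1$, and when both $n$ and $k$ are odd a short calculation gives
$$\Delta(t) = \prod_{j=1}^{k-1}(t + \zeta_k^j) = \frac{t^k + 1}{t+1},$$
so that $\Delta(1) = 1$. Hence $K_{2k+1} \cong \sphere^{2n-1}$, as required.

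The main obstacle is the identification of the link with $\sphere^{2n-1}$ via Brieskorn's theorem, a non-trivial algebraic-topological input. The odd parity hypothesis on $n$ is essential: for $n$ even, the analogous computation yields $\Delta(1) = 0$, and the link is typically not a sphere, so the topological type would genuinely vary with $k$. An alternative route avoiding Brieskorn's machinery would be to construct an explicit homeomorphism of germs between $X_{2k+1}^n$ and $X_{2l+1}^n$ out of the fiber bundle structure over the $w$-axis, whose generic fiber is the affine quadric $\{z_1^2 + \cdots + z_n^2 = 1\} \cong T^* \sphere^{n-1}$, by trivializing the monodromy twist --- which is isotopic to the identity precisely when $n$ is odd. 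Verifying that such a homeomorphism extends continuously across the singular fibre $\{w = 0\}$, however, is at least as delicate as invoking Brieskorn's classification directly, so the Brieskorn approach is the preferred line of attack.
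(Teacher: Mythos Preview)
Your argument is correct and matches the paper's: both identify the link of $X_{2k+1}^n$ with $\sphere^{2n-1}$ via Brieskorn \cite{Br} (the paper phrases this as the link being a $\Z$-homology sphere, hence a sphere; you reach the same conclusion via the explicit computation $\Delta(1)=1$) and then invoke Theorem \ref{thm:not-bilip-homeo} for the bi-Lipschitz inequivalence. One small slip in your aside: for $n$ even the analogous computation gives $\Delta(1)=k$, not $0$ --- your conclusion that the link is then not a sphere is nonetheless correct.
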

\begin{proof}
When $n$ is odd Brieskorn \cite{Br} shows that the link at the origin of $X_{2k+1}^n$ is a $\Z$-homology sphere,
hence is homeomorphic to the Eucldean $(2n-1)$-sphere. From Theorem \ref{thm:not-bilip-homeo}, we deduce
that the given family consists of infinitely many complex algebraic varieties
which are (semialgebraically) homeomorphic but not bi-Lipschitz homeomorphic.
% with respect to the inner metric.
\end{proof}
\begin{proof}[Proof of Theorem \ref{thm:not-bilip-homeo}.] Suppose the contrary. So for some $k>l$,
let $f\colon X_k^n\rightarrow X_l^n$ be a subanalytic bi-Lipschitz homeomorphism. Let us consider $X_k^n(\R)$
the choking horn in $X_k^n$ and the parameterisation
$$\phi\colon [0,1]\times \sphere^{n-1}\rightarrow X_k^n$$
as defined in the proof of the last theorem. So, the image of $f\circ\phi$, which we denote by $Y$,
is a choking horn in $X_l^n$. We claim that the tangent cone of $Y$ at $\bo$ is a real half-line in the
$w$-complex axis (so we may take to be the $y$-real positive half-line after multiplying by $e^{i\theta}$).
Before we prove what we claimed above, let us prove that if $\xi\in\C^{n+1}$ is a vector that does not lie
in the $w$-axis, then for any sufficiently small conical neighborhood of $\xi$, which we denote by $C_{\epsilon}(\xi)$,
there is at least one coordinate hyperplane $H_i=\{z_i=0\}$ such that the projection
$X_l^n\cap C_{\epsilon}(\xi)\rightarrow H_i$ is bi-Lipschitz. In fact, since $\xi\not\in w$-axis at least one of the
first coordinates of $\xi$ must differ from zero. Let us assume, without loss of generality, that $\xi_1\neq 0$
and let us consider the projection on $H_1=\{z_1=0\}$. In this case, locally on $X^n_k$ we have that $z_1$ is
a function depending on $z_2,\dots,z_n,w$ and
$$\frac{\partial z_1}{\partial z_i}=-\frac{z_i}{z_1} \ \mbox{for} \ i=2,\dots,n \ \mbox{and} \
\frac{\partial z_1}{\partial z_i}=\frac{k}{2}\frac{w^{k-1}}{z_1}.$$
Since $z_1$ is bounded away from zero for all $z\in C_{\epsilon}(\xi)$, all partial derivatives above are  bounded.
We conclude that $z_1$ is a locally Lipschitz function, as we desired to show.

Then, if the tangent cone $T_\bo Y$ is tangent to $\xi\not\in w$-axis, once we have proved the projection
$X_l^n\cap C_{\epsilon}(\xi)\rightarrow H_i$ is bi-Lipshitz, we  would have a choking horn in a metrically
conic set which is impossible by the first theorem.

Now, we know that $T_\bo Y$ is the  $y$-real positive half-line. Let us consider the following $\R_+$-action on $X^n_l$:
$$(t,z_1,\dots,z_n,w)\longmapsto t\cdot(z_1,\dots,z_n,w)=(t^{\frac{l}{2}}z_1,\dots,t^{\frac{l}{2}}z_n,tw).$$

Let $u(t),v(t)$ be points belonging to $Y_t$. Then, we know that
$$\| u(t)-v(t) \|\leq d_{X^n_l}(u(t),v(t))\leq \lambda \diam((X^n_k(\R))_t) \approx t^{\frac{k}{2}} \ \mbox{as} \ t\to 0.$$
Choosing the preimages of $u(t)$ and $v(t)$ so their distance represents about the diameter of $t$-link of  $X^n_k$,
we can use the $\R_+$-action on $X^n_l$ to push $Y_t$ to $1$-slice of $X^n_l$. Then,
$$u(t)=(u_1(t),\dots,u_n(t),t) \ \mbox{and} \ v(t)=(v_1(t),\dots,v_n(t),t)$$
whence
$$t^{-1}\cdot u(t)=(t^{-\frac{l}{2}}v_1(t),\dots,t^{-\frac{l}{2}}u_n(t),1) \ \mbox{and} \ t^{-1}\cdot
v(t)=(t^{-\frac{l}{2}}v_1(t),\dots,t^{-\frac{l}{2}}v_n(t),1).$$
Hence
$$\| t^{-1}\cdot u(t)- t^{-1}\cdot v(t) \|\leq \max_{i} t^{-\frac{l}{2}}|u_i(t)-v_i(t)|\lesssim
t^{-\frac{l}{2}} t^{\frac{k}{2}}\to 0 \ \mbox{as} \ t\to 0$$ since $k>l$.

But, then the $1$-slice of $Y$ is homologous to a cycle that gets inside a contractible neighborhood in the
$1$-slice of $X^n_l$, so $Y$ cannot be a choking horn, which is a contradiction. This establishes the proof of the theorem.
\end{proof}
\section{Further comments}
%%The discovery of the existence of choking horns, and of some of their effects on the local inner %%geometric
%%properties of the ambient singularity they lie in, is a new step in the understanding of the local %%inner geometry of complex
%%isolated singularities nearby the singular point. It is reasonable to think that singular germs with %%a more
%%sophisticated geometry should contain other objects (neither fast loops, nor separating set nor %%choking horn)
%%which are features of the local inner geometry at the singularity. It is also quite possible that in %%the forthcoming works
%on this subject a convenient and more systematic frame is set-up to describe adequately many
%%of the objects carrying some information about the local inner geometry nearby the singular %%%point.
%%
Choking horns give an example of a structure that obstructs metric conicalness in any complex
dimension greater than one. It seems likely that singular germs with a more
sophisticated geometry should contain other objects (neither fast loops, nor separating sets nor choking horns)
which are features of the local inner geometry at the singularity.  A related series of questions comes from the fact
that the choking horns we find above are the real part of a particular realization of the $X^k_n$ singularities.
It is irresistible to ask whether one can detect failure of metric conicalness from the real parts of other realizations.
On a related note, our proofs depended heavily on the $\C^*$ action on the singularity.  Given a real singularity
which is a horn, it is natural to ask whether its complexification must necessarily be metrically conic.  A proof
of anything along these lines would require very different techniques than ours.

\section{Appendix: separating sets. By Walter D. Neumann}

This appendix describes a different proof of some of the results of
this paper. We rely on the paper \cite{BFN3} of Birbrair,
Fernandes and Neumann, in which is shown, among other things:
\begin{theorem}
  If the tangent cone $T_pX$ of a normal complex germ $(X,p)$
  has a complex subcone $V$ of complex codimension $\ge1$ which
  separates $T_pX$, then there is a corresponding separating set in
  $(X,p)$ with tangent cone $V$.
\end{theorem}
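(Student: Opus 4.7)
My approach would be to build the separating set as (essentially) a small real-analytic thickening of a complex-analytic lift of $V$ inside $X$, and then verify the density/separation conditions by comparison with the tangent cone. Since $V$ is a complex subcone of $T_pX$ of complex codimension $\ge 1$, the homogeneous ideal of $V$ inside the coordinate ring of $T_pX$ is finitely generated, say by complex homogeneous polynomials $g_1,\dots,g_m$ of degrees $d_1,\dots,d_m$. The first step is to choose holomorphic representatives $G_1,\dots,G_m$ on a small representative of $(X,p)$ whose initial forms (with respect to the filtration defining the tangent cone) are exactly $g_1,\dots,g_m$. This uses that $T_pX$ is cut out by the initial forms of functions vanishing on $X$, so the extension is a matter of selecting the right preimages in the local ring. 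The complex analytic subset $Z:=\{G_1=\dots=G_m=0\}\subset X$ then has tangent cone $V$ at $p$.

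Next, I would form the real-analytic function $\rho(x):=\sum_i |G_i(x)|^2/|x|^{2d_i}$ on $X\setminus\{p\}$ (or a weighted homogeneous variant of it), and take the candidate separating set to be a level set $S_\varepsilon:=\rho^{-1}(\varepsilon)$ for small $\varepsilon>0$. The point of this normalisation is scale-invariance: rescaling by $t$ maps the $t$-link of $X$ to something converging to the link of $T_pX$, and the function $\rho$ converges under this rescaling to the analogous function $\sum_i |g_i(u)|^2/|u|^{2d_i}$ on $T_pX$, whose zero locus is exactly $V$. Consequently, as $t\to 0$ the intersection $S_\varepsilon\cap X_t$, suitably rescaled, converges to an arbitrarily small neighbourhood of $V$ in the link of $T_pX$ (shrinking with $\varepsilon$), and in particular has tangent cone $V$ at $p$.

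Third, I would verify that $S_\varepsilon$ genuinely separates the germ. The hypothesis gives at least two connected components of $\mathrm{link}(T_pX)\setminus V$; choosing $\varepsilon$ small enough, the corresponding regions in $X_t$ (for $t$ small) are well-separated by $S_\varepsilon\cap X_t$, since any connecting path would have to pass through a neighbourhood of $V$ where $\rho<\varepsilon$ forces it to stay, hence it cannot cross $S_\varepsilon$. Normality of $X$ plays a role here: it rules out pathologies where distinct branches of the germ could reconnect these components through hidden bridges. The density computation is then routine: because $V$ has real codimension $\ge 2$ in $T_pX$, the $(2n-1)$-Hausdorff density of $S_\varepsilon$ at $p$ vanishes, while each connected component of $X\setminus S_\varepsilon$ carries a full-dimensional piece inherited from a component of $T_pX\setminus V$, hence has positive density (close to that of the corresponding component of $T_pX$).

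The main obstacle I would expect is precisely the transfer-of-separation step: it is a priori possible that the complex subcone $V$ separates $T_pX$ only because components of $T_pX$ are glued along $V$ at the tangent cone, while in $X$ itself the corresponding regions remain connected through paths whose tangent directions lie in $V$. Ruling this out is where normality is indispensable, and where one must couple the tangent-cone convergence with a quantitative control of $\rho$ on small tubular neighbourhoods of $Z$; making this control uniform as $t\to 0$ (so that the same $\varepsilon$ works at all scales) is the technical heart of the argument.
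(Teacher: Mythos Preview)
Note first that the paper does not prove this theorem: it is quoted from \cite{BFN3} (Theorem~5.1 there, in a more general semialgebraic setting), and only an explicit instance is worked out in the appendix for the Brieskorn family $X(a,a,a_3,\dots,a_n)$. So there is no general proof here to compare against, only that example.

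Your outline has a genuine gap in the choice of the candidate separating set. The function $\rho(x)=\sum_i|G_i(x)|^2/|x|^{2d_i}$ is, by design, asymptotically homogeneous of degree zero: under rescaling it converges to $\tilde\rho(u)=\sum_i|g_i(u)|^2/|u|^{2d_i}$ on $T_pX$. Hence for a \emph{fixed} $\varepsilon>0$ the tangent cone of $S_\varepsilon=\rho^{-1}(\varepsilon)$ at $p$ is the level set $\tilde\rho^{-1}(\varepsilon)\subset T_pX$, a real hypersurface of real codimension~$1$, not $V$. Your sentence ``shrinking with $\varepsilon$'' conflates the limit $t\to 0$ with $\varepsilon\to 0$; only the latter collapses the tube onto $V$, but a separating set must be a single subgerm, not a one-parameter family. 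For the same reason the claimed vanishing of the top-dimensional density of $S_\varepsilon$ is false, and $S_\varepsilon$ fails the defining condition (tangent cone of real codimension $\ge 2$).

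The appendix's explicit construction shows what is needed. There the separating set is the full-dimensional region $Z=\{z\in X:\sum_{i\ge 3}|z_i|^{a_i}\ge \varepsilon(|z_1|^a+|z_2|^a)\}$, not a level hypersurface. The crucial point is that the two sides have \emph{different} orders of vanishing (since $a_i>a$), so along any tangent direction with $(z_1,z_2)\neq(0,0)$ the inequality eventually fails; this is exactly what forces $T_0Z=V$. In your framework the fix is to compare $\sum_i|G_i|^2$ not against $|x|^{2d_i}$ but against a quantity of strictly lower order (equivalently, to take a horn neighbourhood of the lift of $V$ whose aperture shrinks faster than linearly), and to use the resulting region rather than its boundary as the separating set. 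Your separation and normality discussion is then on the right track, but the tangent-cone conclusion hinges on this asymmetry of orders, which your scale-invariant $\rho$ cannot provide.
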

The actual theorem in \cite{BFN3} is Theorem 5.1, which deals
with the more general context of real semialgebraic sets, using a
slightly more general definition of ``separating set'' than is needed
in the complex setting. Here we define a \emph{separating set} to be a
real semialgebraic subgerm $(W,p)\subset (X,p)$ whose tangent cone has
real codimension at least $2$ and which separates $X$ into pieces
whose tangent cones have full dimension.

The theorem is applied in \cite{BFN3} to the
example of the Brieskorn variety
$$X=X(a_1,\dots,a_n):=\{(z_1,\dots,z_n)\in \mathbb
C^n~|~z_1^{a_1}+\dots+z_n^{a_n}=0\}\,$$ with $a_1=a_2=a\ge 2$ and
$a_k>a$ for $k>2$.  The tangent cone at the origin is the union of the
a complex hyperplanes $\{z_1 = \xi z_2\}$ with $\xi$ an $a$-th root of
$-1$. These intersect along the $(n - 2)$-plane $V = \{z_1 = z_2 =
0\}$, which separates the tangent cone into $a$ pieces. So $X$ has a
separating set decomposing it into $a$ pieces having tangent cones the
$a$ hyperplanes above. By Brieskorn \cite{Br}, if $n>3$ then the link
$$\Sigma(a_1,a_2,\dots,a_n):=X(a_1,a_2,\dots,a_n)\cap S^{2n-1}$$
is a topological sphere if at least two of the $a_j$'s have no common
factor with any other $a_j$. We thus see examples in any dimension
$n\ge 3$ of singularities with link a topological sphere and having
separating sets which decompose $X$ into arbitrarily many pieces. This
gives a proof of Theorem 3.3
%\ref{thm:homeo-not-bilip-homeo}
for all dimensions $\ge 3$.

In fact we can be very explicit in this example.  Choose any positive
$\epsilon< 1$. Then putting
$$Y:=\{z\in X~|~ \sum_{i=3}^n |z_i|^{a_i}\le \epsilon(|z_1|^a+|z_2|^a)\},\quad
Z:=\overline{X\setminus Y}\,,$$
the following facts are easily verified:
\begin{enumerate}
\item\label{it1} $T_0Z=\{z\in \mathbb C^n~|~z_1=z_2=0\}$;
\item\label{it2} the image $\pi(Y)$ of the projection of $Y$ to the
  $z_1z_2$-plane has $a$ components;
\item\label{it3} the inverse image of each component of $\pi(Y)$ is a
  component of $Y$, so $Y$ has $a$ components.
\end{enumerate}
Indeed, \eqref{it1} follows immediately from the fact that the
exponents $a_i$ for $i\ge 3$ are greater than $a$. For
\eqref{it2} we note that for any $z=(z_1,\dots,z_n)\in Y$:
\begin{align*}
    |z_1^a+z_2^a|&=|\sum_{i=3}^nz_i^{a_i}|\le \sum_{i=3}^n|z_i|^{a_i}\\&< |z_1|^a+|z_2|^a\,.
\end{align*}
This inequality implies that the coordinates $z_1$ and $z_2$ are both
non-zero, and $z_2/z_1$ cannot be a positive multiple of any $a$-th
root of unity. This condition divides $\pi(Y)$ into $a$ pieces
according to the argument of $z_2/z_1$. For each $a$-th root of $-1$
we denote by $Y'_\xi$ the piece of $\pi(Y)$ which contains points with
$z_2/z_1=\xi$ and denote $Y_\xi=\pi^{-1}(Y'_\xi)$. It is not hard to
check that $Y_\xi$ is connected and its projection to the hyperplane
$z_1=0$ is a bijective map to its image which is bilipschitz in a
neighbourhood of the origin. In fact, $(X,p)=(Y,p)\cup(Z,p)$, glued
along their common boundary, which is topologically the cone over a
disjoint union of $a$ copies of $\Sigma(a_3,\dots,a_n)\times
S^1$.

The previous decomposition $(X,p)=(Y,p)\cup(Z,p)$ is the ``thick-thin
decomposition'' of $(X,p)$. This thick-thin decomposition is discussed in detail for normal surface
singularities in \cite{BNP} (works in progress about such a decomposition in any dimension by  Birbrair, Fernandes, Grandjean,
Neumann, O'Shea, Pichon, Verjovsky). In the case of an isolated complex singularity
germ $(X,p)$ one can construct it as follows: call a tangent line $L$ in $T_pX$ ``very
exceptional'' if no curve in $X$ with tangent $L$ has a metrically
conical neighbourhood. One obtains the thin zone by taking a suitable
horn neighbourhood of the union of all very exceptional tangent lines,
and the thick zone is then the closure of the complement of the thin zone.

Similarly, the example $X(2,\dots,2,k)\subset\mathbb C^{n+1}$ of the
body of this paper has thick-thin decomposition whose thin part is a
$k/2$-horn neighbourhood of the $z_{n+1}$-axis, which has boundary the
cone over $S^1\times \Sigma(2,\dots,2)$.

\begin {thebibliography}{BFN1}

\bibitem{BB1} L. Birbrair \& J.-P. Brasselet, {\it Metric homology.} Comm. Pure Appl. Math. 53 (2000), no. 11, 1434--1447.

\bibitem{BB2}  L. Birbrair \& J.-P. Brasselet, {\it Metric homology for isolated conical singularities.} Bull. Sci. Math. 126 (2002), no. 2, 87--95.

\bibitem{BF}  L. Birbrair  \& A. Fernandes, {\it Inner metric geometry of complex algebraic surfaces with isolated singularities.} Comm. Pure Appl. Math. 61 (2008), no. 11, 1483–-1494.

\bibitem{BFN1} L. Birbrair  \& A. Fernandes \& W. Neumann,  {\it Bi-Lipschitz geometry of weighted homogeneous surface singularities.} Math. Ann. 342 (2008), no. 1, 139–-144.

\bibitem{BFN2} L. Birbrair  \& A. Fernandes \& W. Neumann, {\it Bi-Lipschitz geometry of complex surface singularities.} Geom. Dedicata 139 (2009), 259–-267.

\bibitem{BFN3} L. Birbrair  \& A. Fernandes \& W. Neumann, {\it Separating sets, metric tangent cone and applications for complex algebraic germs.} Selecta Math. (N.S.) 16 (2010), no. 3, 377–-391.

\bibitem{BM}  L. Birbrair  \& T. Mostowski, {\it Normal embeddings of semialgebraic sets.} Michigan Math. J. 47 (2000), no. 1, 125–-132.

\bibitem{BNP} L. Birbrair  \& W. Neumann \& A. Pichon, {\it The thick-thin decomposition and the bilipschitz classification of normal surface singularities.} 	arXiv:1105.3327v3.

\bibitem{Br} E. Brieskorn, {\it Examples of singular normal complex spaces which are topological manifolds.}
Proc. Nat. Acad. Sci. U.S.A. 55 (1966), 1395--1397.

\bibitem{Fe} A. Fernandes, {\it Fast loops on semi-weighted homogeneous hypersurface singularities.} J. Singul. 1 (2010), 85–-93.

\bibitem{Ku}  K. Kurdyka, {\it On a subanalytic stratification satisfying a Whitney property with exponent 1.} Real algebraic geometry (Rennes, 1991), 316–322, Lecture Notes in Math., 1524, Springer, Berlin, 1992.

\bibitem{KuO}  K. Kurdyka, P. Orro {\it Distance g\'eod\'esique sur un sous-analytique.} Real algebraic and analytic geometry (Segovia, 1995). Rev. Mat. Univ. Complut. Madrid 10 (1997), Special Issue, suppl., 173–182.

\bibitem{Mo}  T. Mostowski, {\it Lipschitz equisingularity.} Dissertationes Math. (Rozprawy Mat.) 243 (1985), 46 pp.

\bibitem{Pa}  A. Parusi\'nski, {\it Lipschitz stratification of subanalytic sets.} Ann. Sci. \'Ecole Norm. Sup. (4) 27 (1994), no. 6, 661–-696.

\bibitem{Va} G. Valette, {\it Vanishing Homology.}  Selecta Math. (N.S.) 16 (2010), no. 2, 267--296.
\end{thebibliography}

\end{document}